\documentclass[twocolumn,amsmath,amssymb,aps,secnumarabic,%
    nofootinbib,superscriptaddress]{revtex4-1}
  \usepackage{amsmath,amssymb,amsthm}
  \usepackage{mathrsfs}
  \usepackage{graphicx,color}

\newtheorem{theo}{Theorem}[]
\newtheorem{lemm}[theo]{Lemma}

\theoremstyle{definition}
\newtheorem{rema}[theo]{Remark}


\begin{document}

\title{Yet another short proof of Bourgain's distorsion estimate}

\author{Beno\^{\i}t R. Kloeckner}
\email{benoit.kloeckner@ujf-grenoble.fr}
\affiliation{Institut Fourier, Universit\'e de Grenoble I}

\begin{abstract}
We use a self-improvement argument to give a very short and elementary proof of the 
result of Bourgain saying that regular trees do not admit bi-Lipschitz embeddings
into uniformly convex Banach spaces. 
\end{abstract}

\maketitle

Let $T_n$ be the binary rooted tree of depth $n$ and let $c_B(A)$ denote the
distorsion of the metric space $A$ in $B$, that is to say the infimum
of all numbers $D$ such that there is a number $s$ and a map $\varphi:A\to B$
such that
\[ s d(x,y) \leqslant d(\varphi(x),\varphi(y)) \leqslant s D d(x,y)\]
for all $x,y\in A$.

The modulus of (uniform) convexity $\delta_X(\varepsilon)$
of a Banach space $X$ with norm $|\cdot|$ is defined as 
\[\inf\left\{1-\left|\frac{x+y}2\right| \,\middle|\,
  | x| = | y| =1 \ \textrm{and}\ | x-y|\geqslant \varepsilon\right\}\]
for $\varepsilon\in (0,2]$. The space
$X$ is said to be uniformly convex of type $p\geqslant 2$
if $\delta_X(\varepsilon)\geqslant c \varepsilon^p$
for some $c>0$. Note that in particular, for $p\in(1,\infty)$ the $L^p$ spaces
are uniformly convex of type $\max(2,p)$.

Our main goal is to prove the following as simply as possible.
\begin{theo}[Bourgain]\label{theo:Bourgain}
If $X$ is uniformly convex of type $p$ then
\[c_X(T_n) \gtrsim (\ln n)^{\frac1p}.\]
\end{theo}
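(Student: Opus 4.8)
The plan is to combine a single analytic input from uniform convexity with a self-improvement (bootstrapping) scheme that trades depth against distortion. After rescaling we may assume the embedding $\varphi:T_n\to X$ satisfies $d(x,y)\le|\varphi(x)-\varphi(y)|\le D\,d(x,y)$ with $D$ arbitrarily close to $c_X(T_n)$. The analytic input I would use is the reformulation of $\delta_X(\varepsilon)\ge c\varepsilon^p$ as a power-type parallelogram inequality: there is $c'>0$ with
\[ \Bigl|\frac{u+v}{2}\Bigr|^p + c'\,|u-v|^p \le \frac{|u|^p+|v|^p}{2}\qquad(u,v\in X). \]
This is elementary, and for $X=\ell_2$, $p=2$ it is the parallelogram equality with $c'=1/4$. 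Applied to a \emph{fork} in the tree, namely an apex $a$ with descendants $b,c$ lying in different subtrees at equal depth $t$ (so $d(a,b)=d(a,c)=t$ and $d(b,c)=2t$), taking $u=\varphi(b)-\varphi(a)$ and $v=\varphi(c)-\varphi(a)$ it yields
\[ \Bigl|\varphi(a)-\frac{\varphi(b)+\varphi(c)}{2}\Bigr|^p \le t^p\bigl(D^p-c'2^p\bigr). \]
Thus the midpoint of the two descendant-images sits strictly closer to $\varphi(a)$ than the naive bound $Dt$, the deficit being a fixed multiple of $2^p$.

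The key idea is that this deficit can be harvested at every scale at once. I would introduce a \emph{straightened} map $\psi$ obtained by replacing each vertex-image $\varphi(w)$ by the barycenter of $\varphi$ over a balanced family of deep descendants of $w$; a single midpoint corrects $\varphi(w)$ only at scale $t$, so one must average down to the leaves in order to make the correction proportional to the height of $w$ rather than a fixed additive constant. Realizing $T_{\lfloor n/2\rfloor}$ inside $T_n$ as the even levels, with $(s_1,\dots,s_k)$ identified with $(s_1,s_1,\dots,s_k,s_k)$ so that all distances are multiplied by $2$, I would show that $\psi$ restricted to these levels and rescaled is a non-contracting embedding of $T_{\lfloor n/2\rfloor}$ whose Lipschitz constant $D'$ obeys $(D')^p\le D^p-c_0$ for some $c_0=c_0(X)>0$. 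Morally the upper bound is untouched, while every geodesic of $T_{\lfloor n/2\rfloor}$ now runs through barycenters at each of which a convexity gain of order $c'2^p$ has been subtracted from the local stretch, and summing these gains along the geodesic produces the uniform improvement $c_0$.

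Granting this self-improvement step in the form $c_X(T_n)^p\ge c_X(T_{\lfloor n/2\rfloor})^p+c_0$, the theorem follows by iterating it about $\log_2 n$ times: passing from $T_n$ to $T_{n/2},T_{n/4},\dots$ subtracts $c_0$ from the $p$-th power of the distortion at each step, whereas the distortion of any nontrivial tree is at least $1$. Hence $D^p\ge 1+c_0\log_2 n\gtrsim\ln n$, which is exactly $c_X(T_n)\gtrsim(\ln n)^{1/p}$.

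The main obstacle is the self-improvement step, and specifically the lower (co-Lipschitz) bound for $\psi$: barycentric averaging is controlled from above by the triangle inequality, but one must rule out cancellation to preserve non-contraction. I expect this to be the crux, and the point where uniform convexity is invoked a second time, not merely to produce the per-fork deficit but to guarantee that replacing vertices by barycenters contracts distances by a definite \emph{proportion} rather than a fixed additive amount. Arranging the accumulated convexity gains to telescope cleanly along geodesics, with constants independent of scale, is what converts the scale-invariant distortion into the genuinely decreasing quantity $D^p-c_0$ that drives the induction.
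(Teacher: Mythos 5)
Your global architecture is the right one and matches the paper's: a self-improvement step that passes from $T_n$ to $T_{\lfloor n/2\rfloor}$ (realized as the even levels) while decreasing the $p$-th power of the distortion by a fixed constant $c_0(X)$, iterated $\log_2 n$ times against the trivial lower bound $1$, yielding $D^p\gtrsim\log_2 n$. Your per-fork computation is also correct and is essentially the analytic heart of the matter: uniform convexity of power type $p$ forces the midpoint $\frac{\varphi(b)+\varphi(c)}{2}$ of two depth-$t$ descendants in different subtrees to lie within $t(D^p-c'2^p)^{1/p}$ of $\varphi(a)$, a definite deficit below $Dt$. But the step you yourself flag as the crux --- converting this per-fork deficit into an actual embedding of $T_{\lfloor n/2\rfloor}$ with distortion $(D^p-c_0)^{1/p}$ --- is not proved, and the mechanism you propose (replacing $\varphi(w)$ by the barycenter of $\varphi$ over deep descendants of $w$) does not work as stated, on \emph{both} sides of the distortion. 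For non-contraction: the barycenters of the leaf sets below two vertices $w,w'$ can nearly coincide even when $d(w,w')$ is large, since the differences $\varphi(\ell)-\varphi(\ell')$ over pairs of leaves can cancel; uniform convexity does not rule this out, because nothing forces those differences to point in comparable directions. For the Lipschitz bound, which you assert is ``morally untouched'': coupling the leaves below $w$ with those below $w'$ via the tree isomorphism pairs points at distance $d(w,w')+2(n-\mathrm{depth}(w))$, not $d(w,w')$, so the naive estimate for $|\psi(w)-\psi(w')|$ blows up for vertices far from the leaves. Neither defect is cosmetic; the averaging construction has to be abandoned, not repaired.

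The missing idea is to \emph{select} rather than \emph{average}. The paper proves a dichotomy: for the four-vertex tree with root $a_0$, child $a_1$ and grandchildren $a_2,a_2'$, if $\varphi$ is $D$-Lipschitz and non-contracting then at least one grandchild satisfies $|\varphi(a_0)-\varphi(a_2)|\leqslant 2(D-K/D^{p-1})$ --- if both grandchildren were mapped nearly to distance $2D$, convexity (applied to the unit vectors $x_1/|x_1|$ and $v/|x_1|$ with $v$ the rescaled second edge) would force both to lie near $2x_1$, hence near each other, contradicting $|\varphi(a_2)-\varphi(a_2')|\geqslant 2$. Applying this to both forks below a vertex of $T_n$ selects one grandchild through each child; \emph{restricting} $\varphi$ to the selected vertices at even levels preserves non-contraction for free, and the improved bound on every (doubled) edge propagates to every pair by the triangle inequality along tree geodesics, giving distortion at most $D-K/D^{p-1}$. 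That restriction step is what your proposal lacks, and without it the induction $c_X(T_n)^p\geqslant c_X(T_{\lfloor n/2\rfloor})^p+c_0$ has no proof. (A smaller caveat: the homogeneous two-point inequality $\bigl|\frac{u+v}{2}\bigr|^p+c'|u-v|^p\leqslant\frac{|u|^p+|v|^p}{2}$ for all $u,v\in X$ is a true but non-elementary consequence of $\delta_X(\varepsilon)\geqslant c\varepsilon^p$; the paper sidesteps it by normalizing to unit vectors before invoking the modulus.)
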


Several proofs of this result have been given over the years, see 
notably \cite{Bourgain,Matousek,Mendel-Naor}. As we discovered after writing a first draft of this
paper, the method we use is very close to Johnson and Schechtman's
proof of the distorsion estimate for diamond graphs \cite{Johnson-Schechtman}. However, 
it seems not to have been noticed before that this method gives such
an effective proof of Bourgain's estimate.

%
%
%

\begin{proof}
The first step is similar to previous proofs, notably
the one by Matou{\v{s}}ek's \cite{Matousek}.
Let $Y$ be the four-vertices
tree with one root $a_0$ which has one child $a_1$ and two grand-children
$a_2$, $a'_2$.

\begin{lemm}\label{lemm:geo}
There is a constant $K=K(X)$ such that
if $\varphi:Y\to X$ is $D$-Lipschitz and distance non-decreasing, then either
\[|\varphi(a_0)-\varphi(a_2)|\leqslant 2(D-\frac{K}{D^{p-1}})\]
or
\[|\varphi(a_0)-\varphi(a'_2)|\leqslant 2(D-\frac{K}{D^{p-1}})\]
\end{lemm}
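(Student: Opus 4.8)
The plan is to exploit the fact that the two paths $\varphi(a_0)\varphi(a_1)\varphi(a_2)$ and $\varphi(a_0)\varphi(a_1)\varphi(a_2')$ share their first segment, so they cannot both be almost straight: if both were, then $\varphi(a_2)-\varphi(a_1)$ and $\varphi(a_2')-\varphi(a_1)$ would each align with $\varphi(a_1)-\varphi(a_0)$, hence with one another, contradicting $|\varphi(a_2)-\varphi(a_2')|\geqslant d(a_2,a_2')=2$. To make this quantitative, write $v_i=\varphi(a_i)$ (so that $|v_0-v_1|,|v_1-v_2|,|v_1-v_2'|\leqslant D$ by the Lipschitz bound and $|v_2-v_2'|\geqslant 2$ since $\varphi$ is distance non-decreasing), and set
\[ w=(v_2-v_1)-(v_1-v_0),\qquad w'=(v_2'-v_1)-(v_1-v_0), \]
which measure the bending of the two paths. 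The identity $w-w'=v_2-v_2'$ gives $|w-w'|=|v_2-v_2'|\geqslant 2$, so that $\max(|w|,|w'|)\geqslant 1$; I may assume $|w|\geqslant 1$.

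Next I apply uniform convexity to the first path. With $a=v_1-v_0$ and $b=v_2-v_1$ one has $|a|,|b|\leqslant D$, together with $a+b=v_2-v_0$ and $a-b=-w$, so the ball form of the modulus-of-convexity estimate gives
\[ \frac{|v_0-v_2|}{2}=\Bigl|\frac{a+b}{2}\Bigr|\leqslant D\Bigl(1-\delta_X\bigl(\tfrac{|w|}{D}\bigr)\Bigr). \]
Because $\delta_X$ is non-decreasing and $|w|\geqslant 1$, and because $X$ is uniformly convex of type $p$,
\[ \delta_X\bigl(\tfrac{|w|}{D}\bigr)\geqslant\delta_X\bigl(\tfrac1D\bigr)\geqslant c\,D^{-p}, \]
whence $|v_0-v_2|\leqslant 2D\bigl(1-cD^{-p}\bigr)=2\bigl(D-\frac{c}{D^{p-1}}\bigr)$.

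If instead $|w'|\geqslant 1$, the identical computation with $v_2'$ in place of $v_2$ yields $|v_0-v_2'|\leqslant 2\bigl(D-\frac{c}{D^{p-1}}\bigr)$. In either case one of the two stated alternatives holds, and the lemma follows with $K=c$. The only step that I expect to require genuine care is the ball form of uniform convexity invoked above: the vectors $a,b$ have norm at most $D$ but not necessarily equal to $D$, whereas $\delta_X$ is defined on the unit sphere, so what is really needed is the passage from $\bigl|\tfrac{x+y}{2}\bigr|\leqslant 1-\delta_X(|x-y|)$ for $|x|=|y|=1$ to the same inequality for $|x|,|y|\leqslant 1$. This extension is standard (and at worst changes $K$ by a factor depending only on $X$); once it is granted, everything else is the triangle inequality together with the type-$p$ bound $\delta_X(\varepsilon)\geqslant c\varepsilon^p$.
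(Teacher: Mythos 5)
Your reduction is sound and is essentially the contrapositive of the paper's: writing $w=\varphi(a_2)-2\varphi(a_1)+\varphi(a_0)$ and $w'$ likewise, the identity $w-w'=\varphi(a_2)-\varphi(a_2')$ forces one of the two paths to be ``bent'' by at least $1$, and quantitative convexity then shortens that path's endpoint distance. This is exactly the geometry of the paper's argument (which instead assumes both endpoints are at distance $\geqslant 2(D-\eta)$ and derives $|x_2-x_2'|<2$). The whole weight of your proof therefore rests on the step you yourself flag: applying the modulus of convexity to $a=v_1-v_0$ and $b=v_2-v_1$, which have norms $\leqslant D$ but not equal norms, whereas $\delta_X$ is defined on the unit sphere.

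That step is not as harmless as your parenthetical suggests. The sphere-to-ball statement you need --- $|x|,|y|\leqslant 1$ and $|x-y|\geqslant\varepsilon$ imply $\bigl|\frac{x+y}{2}\bigr|\leqslant 1-\delta_X(\varepsilon)$ with the \emph{same} modulus --- is true (for $\dim X\geqslant 2$; it fails in dimension one), but it is a genuine classical theorem of Figiel/Goebel, not a triangle-inequality remark. The elementary fallback (extend the chord through $x,y$ to the sphere at $u,v$ and use convexity of the norm along the segment) only yields $\bigl|\frac{x+y}{2}\bigr|\leqslant 1-\frac{\varepsilon}{2}\delta_X(\varepsilon)$, because the midpoint of $[x,y]$ may sit at parameter only $\varepsilon/4$ along $[u,v]$. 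That extra factor of $\varepsilon$ turns your bound $\delta_X(1/D)\geqslant cD^{-p}$ into $\gtrsim D^{-(p+1)}$, hence $K/D^{p}$ in place of $K/D^{p-1}$ in the lemma, and ultimately degrades the theorem to $c_X(T_n)\gtrsim(\ln n)^{1/(p+1)}$. So either you must invoke the full-strength Figiel--Goebel equivalence (correct, but it makes the proof non-self-contained), or you should do what the paper does: replace $x_2-x_1$ by its rescaling $v=\frac{|x_1|}{|x_2-x_1|}(x_2-x_1)$, note that under the standing assumption $|x_2|\geqslant 2(D-\eta)$ both $|x_1|$ and $|x_2-x_1|$ lie in $[D-2\eta,D]$ so the rescaling costs only $2\eta$, and then apply the sphere definition verbatim to the unit vectors $x_1/|x_1|$ and $v/|x_1|$. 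With that repair your argument closes, and the constant $K$ it produces matches the paper's.
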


We provide the proof below for the sake of completeness.

Let now $\varphi:T_n\to X$ a $D$-Lipschitz, distance non-decreasing map.
By the lemma, the root $a_0$ has at least two grand-children
$a_2^i$ ($i=1,2$) such that
\[2\leqslant |\varphi(a_0)-\varphi(a^i_2)|\leqslant 2f(D)\]
where $f(D)=D-K/D^{p-1}$.
Applying the lemma again, each of $a_2^i$
also has two grand-children satisfying similar inequalities, and we can apply he same
reasoning every other generation.
Restricting $\varphi$ to these vertices, we get an embedding
of $T_{\lfloor \frac{n}{2}\rfloor}$ whose distorsion is at most
$f(D)$.

We can iterate these restrictions $\lfloor\log_2(n)\rfloor$ times
to get an embedding of $T_1$ whose distorsion is $f^{\lfloor\log_2(n)\rfloor}(D)$.
This must be at least $1$ and $f^{D^p/K}(D)<1$, so that
\[\log_2(n) \lesssim D^p\]
which is Theorem \ref{theo:Bourgain}.
\end{proof}

\begin{rema}
Working out the constants gives the more precise result that 
\begin{equation}
c_X(T_n) \geqslant \left(\frac{pc}2\right)^{\frac1p}(\log_2 n)^{\frac1p} + \mbox{l.o.t.}
\label{eq:tree}
\end{equation}
where $c$ can be replaced by $\liminf \delta_X(\varepsilon)\varepsilon^{-p}$. In particular
\[c_{\ell^2}(T_n) \geqslant \frac{1}{2\sqrt{2}}(\log_2 n)^{\frac12}+ \mbox{l.o.t.}\]
\end{rema}

\begin{proof}[Proof of Lemma \ref{lemm:geo}]
Assume $\varphi(a_0)=0$ and let $x_1=\varphi(a_1)$, $x_2=\varphi(a_2)$ and
 $x'_2=\varphi(a'_2)$.

Suppose that $|x_2|\geqslant 2(D-\eta)$ for some $\eta$ to be chosen afterward;
then by the triangle 
inequality, $|x_1|$ and $|x_2-x_1|$ are at least $D-2\eta$.

Define $v=\frac{|x_1|}{|x_2-x_1|}(x_2-x_1)$; then
\[|x_1+v-x_2|=||x_1|-|x_2-x_1||\leqslant 2\eta\]
and
\[|x_1+v|\geqslant |x_2|-|x_1+v-x_2|\geqslant 2D-4\eta.\]

The vectors $x_1/|x_1|$ and $v/|x_1|$ have unit
norm and their average has norm at least $1-2\eta/D$; letting
$\varepsilon = ({2\eta}/cD)^{\frac1p}$ the convexity assumption therefore
yields $|x_1-v|\leqslant \varepsilon D$.
It follows that
\[|2x_1-x_2|\leqslant |x_1+v-x_2|+|x_1-v|\leqslant 2\eta+\varepsilon D.\]

Suppose that also $|x'_2|\geqslant 2(D-\eta)$; then the same reasoning
yields the same estimate on $|x'_2-2x_1|$ so that
\[|x_2-x'_2|\leqslant 4\eta+2 D \left(\frac{2\eta}{cD}\right)^{\frac1p}\]
Now we can choose $\eta=K/D^{p-1}$ with $K$ small enough to ensure that 
the above inequality reads $|x_2-x'_2|<2$. 
This contradicts the hypothesis that $\varphi$ is distance 
non-decreasing, therefore as desired $|x_2|$ or $|x'_2|$ must be smaller than $2(D-\eta)$.
\end{proof}

\subsection*{Acknowledgments}
I am grateful to Assaf Naor for interesting comments and pointers to the literature.

\bibliographystyle{hamsplain}
\bibliography{biblio}

\providecommand{\bysame}{\leavevmode\hbox to3em{\hrulefill}\thinspace}
\providecommand{\MR}{\relax\ifhmode\unskip\space\fi MR }
\providecommand{\MRhref}[2]{%
  \href{http://www.ams.org/mathscinet-getitem?mr=#1}{#2}
}
\providecommand{\href}[2]{#2}
\providecommand{\eprint}{\begingroup \urlstyle{tt}\Url}
\begin{thebibliography}{1}

\bibitem{Bourgain}
Jean Bourgain, \emph{The metrical interpretation of superreflexivity in
  {B}anach spaces}, Israel J. Math. \textbf{56} (1986), no.~2, 222--230.
  \MR{880292 (88e:46007)}

\bibitem{Johnson-Schechtman}
William~B. Johnson and Gideon Schechtman, \emph{Diamond graphs and
  super-reflexivity}, J. Topol. Anal. \textbf{1} (2009), no.~2, 177--189.
  \MR{2541760 (2010k:52031)}

\bibitem{Matousek}
Ji{\v{r}}{\'{\i}} Matou{\v{s}}ek, \emph{On embedding trees into uniformly
  convex {B}anach spaces}, Israel J. Math. \textbf{114} (1999), 221--237.
  \MR{1738681 (2001b:46028)}

\bibitem{Mendel-Naor}
Manor Mendel and Assaf Naor, \emph{Markov convexity and local rigidity of
  distorted metrics}, J. Eur. Math. Soc. (JEMS) \textbf{15} (2013), no.~1,
  287--337. \MR{2998836}

\end{thebibliography}

\end{document}